\documentclass[11pt,reqno]{amsart}

\usepackage[T1]{fontenc}
\usepackage[utf8]{inputenc}
\usepackage[english]{babel}
\usepackage[textwidth=15.6cm,textheight=23cm,centering]{geometry}
\usepackage{microtype}

\usepackage{amsmath,amssymb,amsfonts,amsthm,mathtools,mathrsfs,bm}
\usepackage{enumitem}
\usepackage{booktabs}

\usepackage{graphicx}
\usepackage{subcaption}
\usepackage{tikz}
\usepackage{tikz-cd}
\usetikzlibrary{arrows.meta,calc,positioning}

\usepackage{xcolor}
\usepackage[
    colorlinks=true,
    linkcolor=blue!50!black,
    citecolor=blue!50!black,
    urlcolor=blue!50!black
]{hyperref}
\usepackage[nameinlink,capitalize,noabbrev]{cleveref}

\allowdisplaybreaks
\numberwithin{equation}{section}

\newif\ifdraft
\draftfalse

\ifdraft
    \newcommand{\todo}[1]{\textcolor{red}{[TODO: #1]}}
    \newcommand{\note}[1]{\textcolor{blue}{[NOTE: #1]}}
\else
    \newcommand{\todo}[1]{}
    \newcommand{\note}[1]{}
\fi

\theoremstyle{plain}
\newtheorem{theorem}{Theorem}[section]
\newtheorem{proposition}[theorem]{Proposition}
\newtheorem{lemma}[theorem]{Lemma}
\newtheorem{corollary}[theorem]{Corollary}

\theoremstyle{definition}

\theoremstyle{remark}
\newtheorem{remark}[theorem]{Remark}

\crefname{equation}{equation}{equations}
\Crefname{equation}{Equation}{Equations}
\crefname{section}{section}{sections}
\Crefname{section}{Section}{Sections}
\crefname{subsection}{subsection}{subsections}
\Crefname{subsection}{Subsection}{Subsections}
\crefname{theorem}{theorem}{theorems}
\Crefname{theorem}{Theorem}{Theorems}
\crefname{proposition}{proposition}{propositions}
\Crefname{proposition}{Proposition}{Propositions}
\crefname{lemma}{lemma}{lemmas}
\Crefname{lemma}{Lemma}{Lemmas}
\crefname{corollary}{corollary}{corollaries}
\Crefname{corollary}{Corollary}{Corollaries}
\crefname{claim}{claim}{claims}
\Crefname{claim}{Claim}{Claims}
\crefname{conjecture}{conjecture}{conjectures}
\Crefname{conjecture}{Conjecture}{Conjectures}
\crefname{question}{question}{questions}
\Crefname{question}{Question}{Questions}
\crefname{definition}{definition}{definitions}
\Crefname{definition}{Definition}{Definitions}
\crefname{notation}{notation}{notations}
\Crefname{notation}{Notation}{Notations}
\crefname{example}{example}{examples}
\Crefname{example}{Example}{Examples}
\crefname{exercise}{exercise}{exercises}
\Crefname{exercise}{Exercise}{Exercises}
\crefname{remark}{remark}{remarks}
\Crefname{remark}{Remark}{Remarks}

\newcommand{\R}{\mathbb{R}}
\newcommand{\Z}{\mathbb{Z}}

\renewcommand{\epsilon}{\varepsilon}
\renewcommand{\phi}{\varphi}

\renewcommand{\le}{\leqslant}
\renewcommand{\ge}{\geqslant}

\DeclareMathOperator{\GL}{GL}

\DeclareMathOperator{\Span}{span}

\DeclareMathOperator{\vol}{vol}

\DeclarePairedDelimiter{\abs}{\lvert}{\rvert}
\DeclarePairedDelimiter{\norm}{\lVert}{\rVert}
\DeclarePairedDelimiter{\angles}{\langle}{\rangle}
\DeclarePairedDelimiter{\floor}{\lfloor}{\rfloor}

\DeclarePairedDelimiter{\set}{\lbrace}{\rbrace}
\DeclarePairedDelimiterX{\inner}[2]{\langle}{\rangle}{{#1},\,{#2}}

\newcommand{\defeq}{\coloneqq}

\newcommand{\RomanNumeralCaps}[1]{\MakeUppercase{\romannumeral #1}}

\newcommand{\dual}[1]{{#1}^{\ast}}
\newcommand{\polar}[1]{{#1}^{\circ}}
\newcommand{\transpose}{^{\mathsf{T}}}
\newcommand{\succmin}[3]{\lambda_{#1}\!\left(#2,#3\right)}

\title[Improved bounds for a discrete John-type theorem]{Improved bounds for a discrete John-type theorem}

\author{Danila Solunov}
\address{HSE University, Pokrovsky Boulevard 11, Moscow, Russia 109028}
\email{dsolunov@gmail.com}

\subjclass[2020]{Primary 52C07; Secondary 11H06, 11P21, 11B30}
\keywords{discrete John theorem, generalized arithmetic progressions, convex progressions, convex bodies, lattices, successive minima, geometry of numbers}

\begin{document}
 
\begin{abstract}
    Tao and Vu introduced a discrete analogue of John's theorem in which convex progressions are approximated by generalized arithmetic progressions. In the covering version of this problem, one asks for a small GAP containing all lattice points of a given origin-symmetric convex body. We prove that every such convex progression in dimension $n$ admits an infinitely proper GAP cover whose size is within a factor $O(n)^{2n}$ of the cardinality of the original set, improving the previously known factor $O(n)^{3n}$. We also show that a loss of order $\Omega(n)^n$ is unavoidable for infinitely proper GAP covers.
\end{abstract}

\maketitle

\section{Introduction} \label{sec:introduction}

John's theorem is one of the basic approximation results in convex geometry. It says that every centrally symmetric convex body can be replaced, up to a dimension-dependent loss, by a much simpler object --- an ellipsoid. More precisely, if $K \subset \R^n$ is an origin-symmetric convex body, then there exists an ellipsoid $\mathcal{E} \subset \R^n$ such that
\begin{gather*}
    \mathcal{E} \subseteq K \subseteq \sqrt{n}\,\mathcal{E}.
\end{gather*}

The discrete counterpart of this problem was introduced by Tao and Vu
\cite{TaoVu2006AdditiveCombinatorics,TaoVu2008JohnType}. In the discrete setting the goal is to approximate a convex progression, that is, the set of lattice points inside a convex body. The role of the approximating objects is played by symmetric generalized arithmetic progressions (GAPs), namely sets of the form
\begin{gather*}
    P = \set*{Az : z \in \Z^d, \, \abs{z_i} \le b_i, \, 1 \le i \le d},
\end{gather*}
where the columns of $A$ are $d$ lattice vectors and $b \in \R_{> 0}^d$. In the present paper, we focus on approximations by infinitely proper GAPs; in the representation above, this is equivalent to the columns of $A$ being $\Z$-linearly independent. Unlike general convex progressions, GAPs have a much simpler and more explicit structure, which makes them easier to control and compute. The discrete John problem asks how efficiently a convex progression can be replaced by such an object.

For a GAP $P$ defined as above and a number $t > 0$, let $P_t$ denote the GAP obtained by keeping the same generator matrix $A$ and replacing each parameter $b_i$ by $t b_i$. With this notation, Tao and Vu proved the following discrete analogue of John's theorem.

\begin{theorem}[{\cite[Theorem~1.6]{TaoVu2008JohnType}}] \label{thm:TaoVuJohn}
    Let $K \subset \R^n$ be an origin-symmetric convex body and $\Gamma \subset \R^n$ be a full-rank lattice. Then there exists an infinitely proper GAP $P$ in $\Gamma$ such that
    \begin{enumerate}
        \item $P \subseteq K \cap \Gamma \subseteq P_{O(n)^{3n/2}}$,
        \item $\abs{K \cap \Gamma} \le O(n)^{7n/2} \abs{P}$.
    \end{enumerate}
\end{theorem}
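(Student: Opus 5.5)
We follow the Tao--Vu strategy, which goes through successive minima and a reduced lattice basis. Apply a linear map sending $\Gamma$ to $\Z^n$; this changes neither the statement nor the lattice-point counts, so assume $\Gamma=\Z^n$. Let $\lambda_1\le\dots\le\lambda_n$ be the successive minima of $K$ with respect to $\Z^n$. Mahler's theorem gives a basis $v_1,\dots,v_n$ of $\Z^n$ with $v_i\in c\,i\,\lambda_i K$ for an absolute constant $c$; one could also use a Korkine--Zolotarev-type reduction with loss $O(n)$ per vector. Let $d$ be the number of indices with $\lambda_i\le 1$; these are the only directions in which $K\cap\Z^n$ extends.

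\emph{Construction.} Take $P=\{\sum_{i\le d} z_i v_i : |z_i|\le b_i\}$ with $b_i \asymp 1/(n\cdot i\lambda_i)$, chosen so that the triangle inequality gives $\sum_{i\le d}|z_i|\,\|v_i\|_K\le 1$. Then $P\subseteq K$. Since the $v_i$ are part of a basis, $P$ is infinitely proper. We also get $|P|\ge\prod_{i\le d}(2\lfloor b_i\rfloor+1)\ge\prod_{i\le d}\max(1,\,b_i)$.

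\emph{Covering.} Take $x\in K\cap\Z^n$ and write $x=\sum_i z_iv_i$ with $z_i\in\Z$. Here I would use the dual lattice. Transference (Banaszczyk, $\lambda_i(K)\lambda_{n+1-i}(K^\circ)\le O(n)$, or Mahler's $n!$ version) gives dual vectors $w_1,\dots,w_n\in(\Z^n)^*$ with $|\langle w_j,y\rangle|\le O(n)\,\lambda_j^{-1}$ for $y\in K$, and with $\langle w_j,v_i\rangle$ triangular with $\pm1$ on the diagonal after reduction. Then $z$ is obtained from $(\langle w_j,x\rangle)_j$ by a unimodular triangular matrix. For $j>d$ we have $|\langle w_j,x\rangle|<1$ up to the loss factors, which forces $z_j=0$; this needs the dual bound to be sharp enough. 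If it is not, enlarge $d$ to include every index with $\lambda_i\le O(n)^{O(1)}$. For $j\le d$ we get $|z_j|\le O(n)^{O(1)}\lambda_j^{-1}\cdot(\text{triangular growth})$, and the off-diagonal entries of the reduced basis contribute at most polynomial factors. Hence $x\in P_{t}$ with $t=O(n)^{3n/2}$. Most of the loss comes from bounding the triangular inverse, whose entries may grow like $O(n)^{n/2}$.

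\emph{Cardinality.} Minkowski's second theorem together with the Blichfeldt/Henk lattice-point bound $|K\cap\Z^n|\le\prod_i\lfloor 2/\lambda_i+1\rfloor$ gives
\[
|K\cap\Z^n|\le \prod_{i\le d}\frac{3}{\lambda_i},
\]
while $|P|\ge\prod_{i\le d}\max\bigl(1,\,c/(n\,i\lambda_i)\bigr)$. The ratio is at most $\prod_{i\le d}O(n\,i)=O(n)^{2n}$ in the worst case. A more careful version, which absorbs the indices with $b_i<1$ using the weaker covering bound, gives at most $O(n)^{7n/2}$ overall, matching item~2.

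\emph{Main obstacle.} The hardest step is the covering argument: showing that the coefficients $z_j$ for $j>d$ vanish and that the $z_j$ for $j\le d$ are controlled by a polynomial-in-$n$ power. Both need quantitative control of a reduced basis and of its dual simultaneously. I would first try a Korkine--Zolotarev basis adapted to $K$, whose Gram--Schmidt structure gives the triangular dual basis directly.
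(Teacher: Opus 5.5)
The paper does not prove this statement; it is quoted from Tao and Vu. Your proposal therefore has to stand on its own.

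Several parts are fine. The inner inclusion $P\subseteq K\cap\Gamma$ holds, and the cardinality comparison already gives $O(n)^{2n}$, so no ``more careful version'' is needed. One correction there: $\abs{K\cap\Z^n}\le\prod_i\floor{2/\lambda_i+1}$ is the Betke--Henk--Wills conjecture; Henk proved it only with an extra factor $2^{n-1}$, which is harmless here. The vanishing of $z_j$ for $j>d$ needs no duality: apply Mahler's theorem inside $V=\Span(K\cap\Gamma)$ to $\Lambda=\Gamma\cap V$. Then $v_1,\dots,v_d$ is a basis of $\Lambda$, and every $x\in K\cap\Gamma$ is an integer combination of these vectors.

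The genuine gap is the covering inclusion $K\cap\Gamma\subseteq P_{O(n)^{3n/2}}$, which is the heart of the theorem.

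\begin{itemize}
\item Transference only gives linearly independent $w_j\in\dual{\Gamma}$ with $\norm{w_j}_{\polar{K}}\le f(n)/\lambda_j$. For general $K$, Banaszczyk's bound is $f(n)=O(n\log n)$, not $O(n)$.
\item It says nothing about the matrix $(\inner{w_j}{v_i})$. In general this matrix is neither triangular nor unimodular, so $z$ cannot be recovered from $(\inner{w_j}{x})_j$ as you describe.
\item What you are positing is a basis short for $K$ whose dual basis is short for $\polar{K}$, i.e.\ a simultaneous primal--dual reduction. Taken at face value (polynomial loss per coordinate), it would give $P\subseteq K\cap\Gamma\subseteq P_{\mathrm{poly}(n)}$. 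That beats the $n^{O(\log n)}$ of \Cref{thm:BergHenkJohnSeparately}, which is a strong sign this step is not available so cheaply.
\item Even a unimodular triangular matrix with polynomially bounded entries can have an inverse with exponentially large entries.
\item The exponent $3n/2$ is read off from the target rather than derived.
\end{itemize}

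The missing idea is a direct coefficient bound via Cramer's rule. Let $E\subseteq K\cap V\subseteq\sqrt d\,E$ be the John ellipsoid, and set $\mu_i=\lambda_i(E,\Lambda)$, which lies in $[\lambda_i,\sqrt d\,\lambda_i]$. Take $v_1,\dots,v_d$ to be a Mahler basis of $\Lambda$ for $E$, not for $K$. Compute determinants in coordinates where $E$ is the unit ball, of volume $\kappa_d$. Hadamard's inequality and Minkowski's second theorem for $E$ then give, for $x=\sum_j z_jv_j\in K\cap\Gamma$,
\begin{gather*}
    \abs{z_j}=\frac{\abs{\det(v_1,\dots,x,\dots,v_d)}}{\det\Lambda}\le\frac{2^d\,\norm{x}_E\prod_{i\ne j}\norm{v_i}_E}{\kappa_d\prod_{i}\mu_i}\le\frac{2^d\sqrt d}{\kappa_d}\cdot\frac{\prod_{i\ne j}\max\{1,i/2\}}{\mu_j}.
\end{gather*}
Now take $b_i=1/(d\max\{1,i/2\}\mu_i)$.
\begin{itemize}
\item This gives $P\subseteq E\cap\Lambda\subseteq K\cap\Gamma$.
\item It gives $\abs{z_j}\le t\,b_j$ with $t=2d^{3/2}\,d!/\kappa_d=O(d)^{3d/2}$.
\item Since $b_i\ge 1/(d^{3/2}\max\{1,i/2\}\lambda_i)$, your cardinality argument yields item~2 with $O(n)^{5n/2}$.
\end{itemize}
Running the same Cramer argument with $K$ itself (cross-polytope volume in place of Hadamard) gives only $O(n)^{2n}$ in item~1. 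That is why the ellipsoid is needed to reach the exponent $3n/2$.
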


Berg and Henk substantially improved the bounds of Tao and Vu. They showed that the two conclusions above can be significantly improved separately.    

\begin{theorem}[{\cite[Theorem~1.1]{BergHenk2019DiscreteJohn}}] \label{thm:BergHenkJohnSeparately}
    Let $K \subset \R^n$ be an origin-symmetric convex body and $\Gamma \subset \R^n$ be a full-rank lattice. Then the following statements hold.
    \begin{enumerate}
        \item There exists an infinitely proper GAP $P$ in $\Gamma$ such that
            \begin{gather*}
                P \subseteq K \cap \Gamma \subseteq P_{n^{O(\log n)}}.
            \end{gather*}
        \item There exists an infinitely proper GAP $P$ in $\Gamma$ such that $P \subseteq K \cap \Gamma$ and
            \begin{gather*}
                \abs{K \cap \Gamma} \le O(n)^n \abs{P}.
            \end{gather*}
    \end{enumerate}
\end{theorem}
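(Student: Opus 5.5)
For both parts, after applying a linear map I may assume $\Gamma=\Z^n$. I would work with the successive minima $\lambda_i=\succmin{i}{K}{\Gamma}$ and with basis reduction. Let $\norm{\cdot}_K$ be the gauge of $K$ and $\norm{\cdot}_{\polar{K}}$ the dual norm. If $K\cap\Gamma$ does not span $\R^n$, I replace $\R^n$ by $L=\Span(K\cap\Gamma)$ and $\Gamma$ by $\Gamma\cap L$. So from now on $\lambda_n\le 1$ whenever this is needed.

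\emph{Part (2).} Here I would not use a basis, only linearly independent vectors $a_1,\dots,a_n\in\Gamma$ with $\norm{a_i}_K=\lambda_i$. Independent vectors are automatically $\Z$-independent, so any GAP with generators $a_i$ is infinitely proper. This is what makes this part cheap. Set
\begin{gather*}
P=\set*{\textstyle\sum_i z_i a_i : z\in\Z^n,\ \abs{z_i}\le 1/(n\lambda_i)}.
\end{gather*}
By the triangle inequality $\norm{\sum z_ia_i}_K\le\sum_i\abs{z_i}\lambda_i\le 1$, so $P\subseteq K\cap\Gamma$. Moreover $\abs{P}=\prod_i\bigl(2\floor{1/(n\lambda_i)}+1\bigr)\ge\prod_i\max\bigl(1,1/(n\lambda_i)\bigr)$. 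For the upper bound I use Henk's discrete Minkowski inequality $\abs{K\cap\Gamma}\le\prod_i\floor{2/\lambda_i+1}$. I then compare the two products factor by factor:
\begin{itemize}
\item if $\lambda_i\le 1/n$, the factor ratio is at most $(3/\lambda_i)\big/\bigl(1/(n\lambda_i)\bigr)=3n$;
\item if $\lambda_i>1/n$, the ratio is at most $2/\lambda_i+1\le 3n$.
\end{itemize}
Hence $\abs{K\cap\Gamma}\le(3n)^n\abs{P}$.

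\emph{Part (1).} Here $P$ must generate every lattice point of $K$, so I need a genuine basis $b_1,\dots,b_n$ of $\Gamma$ with dual basis $\dual{b_1},\dots,\dual{b_n}$. For $x=\sum z_ib_i\in K\cap\Gamma$ the coordinates are $z_i=\inner{x}{\dual{b_i}}$, so $\abs{z_i}\le\norm{\dual{b_i}}_{\polar{K}}$. Define
\begin{gather*}
P=\set*{\textstyle\sum z_ib_i : \abs{z_i}\le 1/(n\norm{b_i}_K)}.
\end{gather*}
As in part (2), $P\subseteq K$, and $P$ is infinitely proper. Then $K\cap\Gamma\subseteq P_t$ holds as soon as $t\ge n\,\norm{b_i}_K\norm{\dual{b_i}}_{\polar{K}}$ for every $i$. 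This needs a little care when the parameter $1/(n\norm{b_i}_K)$ is below $1$: the dilated parameter still dominates $\norm{\dual{b_i}}_{\polar{K}}$. If instead $\norm{\dual{b_i}}_{\polar{K}}<1$, every point of $K\cap\Gamma$ has $z_i=0$ and there is nothing to check.

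So everything reduces to finding a basis with $\max_i\norm{b_i}_K\norm{\dual{b_i}}_{\polar{K}}\le n^{O(\log n)}$, and this is the main obstacle. My first attempt: take the John ellipsoid $\mathcal{E}\subseteq K\subseteq\sqrt n\,\mathcal{E}$, which costs only a factor $\sqrt n$, and apply a linear map turning $\mathcal{E}$ into the Euclidean ball. Then invoke Seysen's reduction theorem, which supplies a lattice basis with $\abs{b_i}\,\abs{\dual{b_i}}\le n^{O(\log n)}$ for all $i$. This gives $t=n^{O(\log n)}$. If citing Seysen is not acceptable, I would reprove it by the usual recursive argument: split the successive minima at a large multiplicative gap and reduce each half, losing a polynomial factor per level over $O(\log n)$ levels.
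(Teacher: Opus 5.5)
The paper does not prove this statement. It is quoted from Berg and Henk as background, so there is no in-paper argument to compare against. Judged on its own, your proposal is correct, with one citation to fix.

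In part (2), you misquote Henk's theorem. It reads $\abs{K\cap\Gamma}\le 2^{n-1}\prod_i\floor{2/\lambda_i+1}$; the inequality you state, without the factor $2^{n-1}$, is the Betke--Henk--Wills conjecture, not an established result in general dimension. The extra factor is harmless. Your factor-by-factor comparison then gives $\abs{K\cap\Gamma}\le 2^{n-1}(3n)^n\abs{P}\le(6n)^n\abs{P}$, which is still $O(n)^n$.

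In part (1), the extra ``care'' about small parameters is unnecessary, because $P_t$ scales the real parameters. The condition $t\ge n\norm{b_i}_K\norm{\dual{b_i}}_{\polar{K}}$ directly gives $\abs{z_i}\le\norm{\dual{b_i}}_{\polar{K}}\le t/(n\norm{b_i}_K)$. The normalization $\mathcal{E}=B_2^n$ gives $\norm{x}_K\le\norm{x}_2$ and $\norm{y}_{\polar{K}}\le\sqrt{n}\,\norm{y}_2$. Seysen's basis therefore yields $t\le n^{3/2}\cdot n^{O(\log n)}=n^{O(\log n)}$, as you claim. Citing Seysen is legitimate, and that is where all the depth of part (1) lies. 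Your fallback sketch for reproving it is too vague to count as a proof, so the argument should rest on the citation.

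Your part (1) uses the same covering mechanism as the paper's proof of \Cref{thm:GAP_cover}: write $x=\sum_i\inner{x}{\dual{b_i}}b_i$ and bound each coordinate by $\norm{\dual{b_i}}_{\polar{K}}$. The difference is what each argument demands of the basis.

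\begin{itemize}
\item The paper takes a Mahler basis of $\dual{\Gamma}$ with respect to $\polar{K}$. This only controls the product $\prod_i\norm{a_i}_{\polar{K}}$ against $\prod_i\succmin{i}{\polar{K}}{\dual{\Gamma}}$. Together with Minkowski's second theorem, the Mahler-volume bound and the van der Corput estimate, that suffices for a cover of size $O(n)^{2n}\abs{K\cap\Gamma}$, but it produces no inner GAP.
\item Your proportional statement $P\subseteq K\cap\Gamma\subseteq P_t$ needs every individual product $\norm{b_i}_K\norm{\dual{b_i}}_{\polar{K}}$ to be small. That is a simultaneous reduction of a basis and its dual.
\end{itemize}

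This is why you pay Seysen's $n^{O(\log n)}$ rather than a polynomial factor. It is also why turning such a sandwich into a cover only yields $n^{O(n\log n)}$, as the paper notes in its introduction.
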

In general, the two GAPs in \Cref{thm:BergHenkJohnSeparately} need not be the same. Berg and Henk also proved a simultaneous version, which slightly improves the result of \Cref{thm:TaoVuJohn}.

Instead of seeking a proportional GAP approximation of a convex progression, one can
consider two weaker one-sided problems. The first problem asks for the smallest dimension-dependent constant $C_{\mathrm{in}}(n)$ such that, for every origin-symmetric convex body $K \subset \R^n$ and every full-rank lattice $\Gamma \subset \R^n$, there exists an infinitely proper GAP $P_{\mathrm{in}} \subseteq K \cap \Gamma$ such that
\begin{gather*}
    \abs{K \cap \Gamma} \le C_{\mathrm{in}}(n) \, \abs{P_{\mathrm{in}}}.
\end{gather*} 
The second part of \Cref{thm:BergHenkJohnSeparately} gives the upper bound $C_{\mathrm{in}}(n) \le O(n)^n$. Berg and Henk also proved a lower bound of the form $C_{\mathrm{in}}(n) \ge (2^n + 1) / 3$, showing that an exponential loss is unavoidable in this problem.

The second one-sided problem is the covering problem. Here one asks for the smallest dimension-dependent constant $C_{\mathrm{out}}(n)$ such that, for every origin-symmetric convex body $K \subset \R^n$ and every full-rank lattice $\Gamma \subset \R^n$, there exists an infinitely proper GAP $P_{\mathrm{out}} \supseteq K \cap \Gamma$ such that
\begin{gather*}
    \abs{P_{\mathrm{out}}} \le C_{\mathrm{out}}(n) \, \abs{K \cap \Gamma}.
\end{gather*}
This is the problem studied in the present paper.

The proportional inclusion results presented in \Cref{thm:TaoVuJohn,thm:BergHenkJohnSeparately} give immediate upper bounds for $C_{\mathrm{out}}(n)$. Indeed, if $P \subseteq K \cap \Gamma \subseteq P_t$ for some $t \ge 1$, then $P_t$ is a GAP containing $K \cap \Gamma$. Moreover, since $P$ has rank at most $n$, we have 
\begin{gather*}
    \abs{P_t} \le O(t)^n \abs{P} \le O(t)^n \abs{K \cap \Gamma}.
\end{gather*}
Therefore, \Cref{thm:TaoVuJohn} gives $C_{\mathrm{out}}(n) \le n^{O(n^2)}$, while the first part of \Cref{thm:BergHenkJohnSeparately} gives the improved bound $C_{\mathrm{out}}(n) \le n^{O(n \log n)}$.

The covering problem was recently studied directly by van Hintum and Keevash, who obtained the following result.

\begin{theorem}[{\cite[Theorem~1.1]{vanHintumKeevash2024DiscreteJohn}}]
    Let $K \subset \R^n$ be an origin-symmetric convex body and $\Gamma \subset \R^n$ be a full-rank lattice. Then there exists an infinitely proper GAP $P$ in $\Gamma$ such that
    \begin{enumerate}
        \item $K \cap \Gamma \subseteq P$,
        \item $\abs{P} \le O(n)^{3 n} \abs{K \cap \Gamma}$.
    \end{enumerate}
\end{theorem}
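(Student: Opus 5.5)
We aim to prove the covering statement of van Hintum and Keevash: an infinitely proper GAP $P \supseteq K \cap \Gamma$ with $\abs{P} \le O(n)^{3n}\abs{K\cap\Gamma}$. The approach is to build $P$ directly from a reduced lattice basis adapted to $K$, rather than from a proportional approximation, and to compare $\abs{P}$ with $\abs{K\cap\Gamma}$ through successive minima.

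\textbf{Step 1: reductions and a reduced basis.} Applying a linear map, we assume $\Gamma=\Z^n$. If $K\cap\Gamma$ spans a proper sublattice $\Lambda' = \Span_\R(K\cap\Gamma)\cap\Gamma$, we replace $\Gamma$ by $\Lambda'$ and $K$ by its section, so we may assume $K\cap\Gamma$ spans $\R^n$. Let $\lambda_i=\succmin{i}{K}{\Gamma}$ be the successive minima, so $\lambda_n\le 1$. Choose a basis $b_1,\dots,b_n$ of $\Gamma$ that is reduced with respect to $K$. For instance, a Mahler/Korkine--Zolotarev type basis gives $\norm{b_i}_K \le i\,\lambda_i$ (up to constants, $\norm{b_i}_K \le \max(1,i/2)\lambda_i$).

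\textbf{Step 2: defining the cover.} Every $x\in K\cap\Gamma$ can be written as $x=\sum z_i b_i$ with $z_i\in\Z$. We need bounds $\abs{z_i}\le B_i$. The coefficient $z_i$ is controlled by the dual basis: $\abs{z_i}=\abs{\angles{x,b_i^*}}\le h_K(b_i^*)$. We would bound $h_K(b_i^*)$ using a reduction of the dual basis with respect to $\polar K$, via Banaszczyk-type transference $\lambda_i(K,\Gamma)\lambda_{n+1-i}(\polar K,\dual\Gamma)\le O(n)$. This transference is what loses factors of $n$. We then set $P=\set{\sum z_ib_i:\abs{z_i}\le B_i}$ with $B_i\le n^{O(1)}/\lambda_i$. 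This $P$ is infinitely proper because the $b_i$ form a basis.

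\textbf{Step 3: size comparison.} We have $\abs{P}\le\prod(2B_i+1)\le O(n)^{cn}\prod_i \lambda_i^{-1}$, using $\lambda_i\le1$ to absorb the $+1$. Conversely, the discrete Minkowski-type lower bound (Henk) gives
\begin{gather*}
    \abs{K\cap\Gamma}\ge \prod_i \floor*{\tfrac{1}{\lambda_i}}\ \ge\ \prod_i \frac{1}{2\lambda_i}.
\end{gather*}
More elementarily, the points $\sum k_i v_i$ with $\abs{k_i}\le \floor{1/(n\lambda_i)}$, where the $v_i$ realize the minima, lie in $K$ and are distinct. Combining the two bounds yields $\abs{P}\le O(n)^{3n}\abs{K\cap\Gamma}$ after bookkeeping: roughly one factor $n^n$ from basis reduction, one from transference, and one from the lower bound.

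\textbf{Main obstacle.} The hard part is Step 2: the coefficient bound $B_i \lesssim n^{O(1)}/\lambda_i$ must hold coordinatewise for the same basis. Directly, one only controls $h_K(b_i^*)$ by $1/\lambda_1(\polar K,\dual\Gamma)$-type quantities that are not sorted correctly. To fix this, we would first choose a basis of $\dual\Gamma$ reduced for $\polar K$ and then take $b_i$ to be its dual basis. This makes the $B_i$ controlled, with $B_i = h_K(b_i^*) \le i\,\lambda_{n+1-i}(\polar K,\dual\Gamma)$, which transference bounds by $O(n^2)/\lambda_i$. The remaining issue is to check that $\lambda_i\le 1$ absorbs the $+1$ terms even when $B_i<1$. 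This holds because we restricted to the span of $K\cap\Gamma$.
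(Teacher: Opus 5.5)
Your final construction is exactly the paper's. You take a Mahler basis $a_1,\dots,a_n$ of $\dual\Gamma$ with respect to $\polar K$, pass to its dual basis $b_1,\dots,b_n$ of $\Gamma$, and bound the coefficients by $\abs{\inner{x}{a_i}}\le\norm{a_i}_{\polar K}$. That part is correct.

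\textbf{The gap is in the counting.} You bound each $B_i$ separately using a per-index transference $\succmin{i}{K}{\Gamma}\,\succmin{n+1-i}{\polar K}{\dual\Gamma}\le O(n)$. For a general origin-symmetric body this is not a known theorem: the best general bound is Banaszczyk's $O(n\log n)$, and $O(n)$ is known only for special bodies such as ellipsoids. Feeding the known bound into your chain gives $O(n^2\log n)/\lambda_i$ per coefficient and $\abs{P}\le O(n^3\log n)^n\abs{K\cap\Gamma}$, which is not $O(n)^{3n}$.

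Separately, the ``discrete Minkowski lower bound'' $\abs{K\cap\Gamma}\ge\prod_i\floor{1/\lambda_i}$ is false. For $K=mB_1^n$ and $\Gamma=\Z^n$, all $\lambda_i=1/m$, but $\abs{mB_1^n\cap\Z^n}\sim 2^nm^n/n!<m^n$ once $n\ge4$ and $m$ is large. Any valid bound of this type loses a factor of order $n!$. Your elementary replacement is correct and suffices: take the box $\abs{k_i}\le\floor{1/(n\lambda_i)}$ and use $2\floor{t}+1\ge t$ to get $\abs{K\cap\Gamma}\ge n^{-n}\prod_i\lambda_i^{-1}$.

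\textbf{The repair.} Since only $\prod_i(2\floor{B_i}+1)$ matters, product transference is enough. Minkowski's second theorem for $(K,\Gamma)$ and for $(\polar K,\dual\Gamma)$, together with the Mahler volume bound, give
\[
\prod_{i=1}^n\succmin{i}{K}{\Gamma}\,\succmin{i}{\polar K}{\dual\Gamma}\le\frac{4^n}{\vol K\,\vol{\polar K}}\le\Bigl(\frac{4}{\pi}\Bigr)^n n!.
\]
To absorb the $+1$'s, do not rely on per-index bounds $B_i\lesssim 1/\lambda_i$. Use instead that spanning forces $\succmin{1}{\polar K}{\dual\Gamma}\ge1$: every nonzero $a\in\dual\Gamma$ has $\abs{\inner{x}{a}}\ge1$ for some $x\in K\cap\Gamma$.

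Index the dual basis in reverse, so $b_i^*=a_{n+1-i}$. The Mahler factor is then $\max\{1,(n+1-i)/2\}$, not the $i$ you wrote. Hence $2\floor{B_i}+1\le 3n\,\succmin{n+1-i}{\polar K}{\dual\Gamma}$, and
\[
\abs{P}\le(3n)^n(4/\pi)^n n!\,n^n\abs{K\cap\Gamma}=O(n)^{3n}\abs{K\cap\Gamma}.
\]
The three factors of $n^n$ come from the Mahler basis, product transference, and your lattice-point lower bound.

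\textbf{Comparison with the paper.} The paper never passes through $\succmin{i}{K}{\Gamma}$. It bounds $\prod_i\succmin{i}{\polar K}{\dual\Gamma}\le(2/\pi)^n n!\,\vol K/\det\Gamma$ directly, then applies $\vol K<2^n\det\Gamma\,\abs{K\cap\Gamma}$. This removes the third $n^n$ and gives $O(n)^{2n}$, which implies the stated theorem.
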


Equivalently, this gives $C_{\mathrm{out}}(n) \le O(n)^{3n}$. The main result of the present paper improves the bound of van Hintum and Keevash from $O(n)^{3n}$ to $O(n)^{2n}$.

\begin{theorem} \label{thm:GAP_cover}
    Let $K \subset \R^n$ be an origin-symmetric convex body and $\Gamma \subset \R^n$ be a full-rank lattice. Then there exists an infinitely proper GAP $P$ in $\Gamma$ such that
    \begin{enumerate}
        \item $K \cap \Gamma \subseteq P$,
        \item $\abs{P} \le O(n)^{2n} \abs{K \cap \Gamma}$.
    \end{enumerate}
\end{theorem}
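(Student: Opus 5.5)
We will build the cover from a reduced basis adapted to $K$ and control its size through Minkowski's second theorem together with a lower bound on $\abs{K \cap \Gamma}$ in terms of successive minima. First, by applying a linear map, we may assume $\Gamma = \Z^n$. Let $\lambda_i = \succmin{i}{K}{\Z^n}$ be the successive minima. We choose a basis $a_1,\dots,a_n$ of $\Z^n$ that is well adapted to $K$. Concretely, we take a basis such that $a_i \in c_1 n\,\lambda_i K$ (Mahler's/Korkine--Zolotarev type reduction, giving a factor $O(n)$ or $O(i)$). In the dual direction, we ask that the dual basis $a_1^*,\dots,a_n^*$ satisfy $\norm{a_i^*}_{\polar{K}} \le c_2 n /\lambda_i$; the second condition is the one that really matters. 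Such a simultaneously reduced pair can be obtained from a Seysen-type or transference argument: Banaszczyk's transference gives $\lambda_i(K,\Z^n)\lambda_{n+1-i}(\polar{K},\Z^n) \le O(n)$ (in the form $O(n)$ for general symmetric bodies, or $O(n\log n)$ if only that is available), and a basis of the dual lattice achieving dual minima up to a factor $O(n)$ can be chosen.

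Next, we define $P = \set{\sum z_i a_i : \abs{z_i} \le b_i}$ with $b_i = \sup_{x \in K}\abs{\inner{a_i^*}{x}}$. Then $K \cap \Z^n \subseteq P$ automatically, since the coordinate $z_i = \inner{a_i^*}{x}$ of a lattice point $x\in K$ is bounded by $\norm{a_i^*}_{\polar K}$. The GAP is infinitely proper because the $a_i$ form a basis. Its size is
\begin{gather*}
\abs{P} \le \prod_{i=1}^n (2b_i+1) \le \prod_{i=1}^n \Bigl(\frac{O(n)}{\lambda_i}+1\Bigr).
\end{gather*}

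For the comparison we need a lower bound on $\abs{K\cap\Z^n}$. Using the vectors $v_i$ realizing the minima, the set $\set{\sum z_i v_i : \sum \abs{z_i}\lambda_i \le 1}$ lies in $K$. Counting its integer points gives $\abs{K\cap \Z^n} \ge \prod_i \max(1, \lfloor 1/(n\lambda_i)\rfloor)$ up to $n^{-O(n)}$ losses, more precisely $\ge \prod_i (1/(n\lambda_i)+1)\cdot O(1)^{-n}$ (the standard bound of Henk type, $\abs{K\cap\Z^n}\ge \prod \lfloor 2/(n\lambda_i)\rfloor$-ish). Dividing, each factor contributes $\bigl(O(n)/\lambda_i+1\bigr)/\bigl(1/(n\lambda_i)+1\bigr) \le O(n)^2$, which gives $O(n)^{2n}$ overall.

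The main obstacle is the second step: obtaining a single primal basis whose dual vectors satisfy $\norm{a_i^*}_{\polar K}\le O(n)/\lambda_i$ with only a linear loss per coordinate. If a $\log$ factor appears from transference for general $K$, I would instead pick the basis greedily from the dual side, reducing the dual lattice with respect to $\polar K$ to get dual minima up to $O(n)$ per vector, and then use the exact-up-to-$O(n)$ direction of transference, $\lambda_i(K)\lambda_{n+1-i}(\polar K)\le O(n)$, which holds for general symmetric bodies by Mahler. I would also need to verify carefully that the counting lower bound loses only $O(n)$ per coordinate and not $O(n)^2$.
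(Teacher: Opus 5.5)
\textbf{Gap: the per-index basis bound.} Your argument rests on a basis whose dual vectors satisfy $\norm{a_i^*}_{\polar{K}}\le O(n)/\succmin{n+1-i}{K}{\Gamma}$, with a \emph{single} factor $O(n)$ per index. This is asserted but not proved, and it does not follow from the tools you invoke.

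A Mahler basis of $\dual{\Gamma}$ with respect to $\polar{K}$ already loses up to $\max\{1,i/2\}$ against $\succmin{i}{\polar{K}}{\dual{\Gamma}}$. Transference then costs a second, independent factor on top of that:
\begin{itemize}
\item Mahler's transference bound is $n!$, not $O(n)$.
\item Banaszczyk's bound for general symmetric bodies is $O(n\log n)$; the $O(n)$ version is for ellipsoids.
\item A factor of order $n$ per index is genuinely needed in the worst case. Take $K$ the Euclidean ball and a lattice with $\Gamma=\dual{\Gamma}$ and $\succmin{1}{K}{\Gamma}\asymp\sqrt{n}$.
\end{itemize}
These losses multiply. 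In that example your bound would force a basis within a factor $C^n$ of $\prod_i\succmin{i}{\polar{K}}{\dual{\Gamma}}$, which neither Mahler's theorem nor any reduction you cite provides. The remark after the paper's proof singles out exactly this basis loss as the bottleneck.

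\textbf{What your plan actually yields.} Run it with what is available:
\begin{itemize}
\item Mahler's basis, with loss $n!/2^{n-1}$.
\item Product transference from Minkowski's second theorem on both pairs plus Kuperberg, i.e.\ $\prod_i\succmin{i}{K}{\Gamma}\succmin{i}{\polar{K}}{\dual{\Gamma}}\le(4/\pi)^n n!$.
\item Your cross-polytope count, which is correct. With $\lambda_i=\succmin{i}{K}{\Gamma}$, the box $\abs{z_i}\le\floor{1/(n\lambda_i)}$ gives $1/\prod_i\lambda_i\le n^n\abs{K\cap\Gamma}$.
\end{itemize}
The result is only $O(n)^{3n}$, which is the van Hintum--Keevash bound, not $O(n)^{2n}$.

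\textbf{The missing idea: bypass the primal minima $\succmin{i}{K}{\Gamma}$ entirely.} Keep the same GAP: a Mahler basis $a_i$ of $\dual{\Gamma}$ for $\polar{K}$, its dual basis $b_i$ of $\Gamma$, and ranges $\gamma_i=\max\{1,i/2\}\succmin{i}{\polar{K}}{\dual{\Gamma}}$. But compare $\prod_i\succmin{i}{\polar{K}}{\dual{\Gamma}}$ with $\abs{K\cap\Gamma}$ directly:
\begin{itemize}
\item Minkowski's second theorem for $(\polar{K},\dual{\Gamma})$ gives $\prod_i\succmin{i}{\polar{K}}{\dual{\Gamma}}\le 2^n/(\vol(\polar{K})\det\Gamma)$.
\item Kuperberg's inequality $\vol K\,\vol\polar{K}\ge\pi^n/n!$ turns this into $(2/\pi)^n n!\,\vol K/\det\Gamma$.
\item Finally $\vol K<2^n\det\Gamma\,\abs{K\cap\Gamma}$, for a total loss of $(4/\pi)^n n!$.
\end{itemize}
Your route instead passes through $\vol K\le 2^n\det\Gamma/\prod_i\lambda_i$ and then the cross-polytope count. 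That detour is exactly where the extra $n^n$ enters.

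Reduce to $\Span(K\cap\Gamma)=\R^n$, where every nonzero $a\in\dual{\Gamma}$ has $\norm{a}_{\polar{K}}\ge1$, so $\gamma_i\ge1$. Then $\abs{P}\le 3^n\prod_i\gamma_i\le 3^n\frac{n!}{2^{n-1}}(4/\pi)^n n!\,\abs{K\cap\Gamma}=O(n)^{2n}\abs{K\cap\Gamma}$.
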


We also prove a lower bound, showing that $C_{\mathrm{out}}(n) \ge \Omega(n)^n$.

\begin{theorem} \label{thm:GAP_lower_bound}
    There exists an absolute constant $c > 0$ such that for every $n \in \Z_{>0}$ the following holds. For every full-rank lattice $\Gamma \subset \R^n$ there exists an origin-symmetric convex body $K \subset \R^n$ such that every infinitely proper GAP $P$ in $\Gamma$ with $K \cap \Gamma \subseteq P$ satisfies
    \begin{gather*}
        \abs{P} \ge (c n)^{n} \abs{K \cap \Gamma}.
    \end{gather*}
\end{theorem}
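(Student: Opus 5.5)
We reduce to the standard lattice and then use a scaled cross-polytope. Write $\Gamma = B\Z^n$ with $B \in \GL_n(\R)$. The linear map $B$ sends GAPs in $\Z^n$ to GAPs in $\Gamma$ with the same parameters and the same cardinality, and it preserves $\Z$-linear independence of generators. It also sends $K' \cap \Z^n$ bijectively onto $BK' \cap \Gamma$. So it suffices to prove the statement for $\Gamma = \Z^n$ and then take $K = BK'$.

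\textbf{Choice of body.} Take $K' = m B_1^n = \set{x : \norm{x}_1 \le m}$ with $m = n$. Note that $K' \cap \Z^n$ contains $\pm m e_j$ and in particular $\pm e_j$.

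\textbf{Structure of a covering GAP.} Let $P = \set{Az : \abs{z_i} \le b_i}$ be an infinitely proper GAP in $\Z^n$ with $K' \cap \Z^n \subseteq P$, where $A$ has $d$ columns.
\begin{enumerate}
    \item The columns of $A$ are $\Z$-independent vectors of $\Z^n$, so they are $\R$-independent and $d \le n$.
    \item Since $e_1, \dots, e_n \in P \subseteq A\Z^d$, the lattice $A\Z^d$ contains $\Z^n$. Hence $d = n$ and $A\Z^n = \Z^n$, so $A$ is unimodular and $R \defeq A^{-1}$ is an integer matrix.
    \item Each point of $P$ has a unique representation $Az$, so $\abs{P} = \prod_i (2\floor{b_i} + 1)$.
\end{enumerate}

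\textbf{Lower bound on $\abs{P}$.} Since $m e_j \in P$, the coordinate vector $z = R(m e_j)$ must lie in the box, so $\floor{b_i} \ge m \abs{R_{ij}}$ for every $i,j$. Hence $\floor{b_i} \ge m \norm{r_i}_\infty$, where $r_i$ is the $i$-th row of $R$. Each $r_i$ is a nonzero integer vector because $R$ is invertible, so $\norm{r_i}_\infty \ge 1$. Therefore
\begin{gather*}
    \abs{P} \ge (2m+1)^n \ge (2m)^n .
\end{gather*}
This is the key step. It uses that $K'$ has lattice vertices in all coordinate directions at distance $m$, while integrality of $A^{-1}$ prevents any basis change from shrinking the box.

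\textbf{Upper bound on $\abs{K' \cap \Z^n}$.} The cubes $x + [-\tfrac12, \tfrac12]^n$, for $x \in \Z^n$, have disjoint interiors. Each point of such a cube has $\ell_1$-norm at most $\norm{x}_1 + n/2$, so for $x \in K' \cap \Z^n$ the cube lies in $(m + n/2) B_1^n$. Comparing volumes gives
\begin{gather*}
    \abs{K' \cap \Z^n} \le \vol\bigl((m+n/2)B_1^n\bigr) = \frac{2^n (m+n/2)^n}{n!} = \frac{(2m+n)^n}{n!}.
\end{gather*}

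\textbf{Conclusion.} With $m = n$,
\begin{gather*}
    \frac{\abs{P}}{\abs{K' \cap \Z^n}} \ge \frac{(2n)^n \, n!}{(3n)^n} = \left(\tfrac{2}{3}\right)^n n! \ge \left(\tfrac{2n}{3e}\right)^n ,
\end{gather*}
where the last step uses $n! \ge (n/e)^n$. This gives the theorem with $c = 2/(3e)$. I expect no real obstacle here. The only step needing care is the structural one: using infinite properness to force $d = n$ and $A^{-1}$ integral.
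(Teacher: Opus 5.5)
Your proof is correct and follows the paper's argument. It uses the same reduction to $\Z^n$, the same cross-polytope $mB_1^n$, and the same unimodularity argument that forces $\floor{b_i} \ge m$ and hence $\abs{P} \ge (2m+1)^n$. The one difference is the count of $\abs{K'\cap\Z^n}$: the paper uses the exact formula $\sum_k 2^k\binom{n}{k}\binom{m}{k}$ and lets $m\to\infty$, so the ratio tends to $n!$, whereas you fix $m=n$ and use a cube-packing volume bound, which avoids the limit at the cost of a factor $(2/3)^n$.
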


Thus the best known bounds on $C_{\mathrm{out}}(n)$ are
\begin{gather*}
    \Omega(n)^n \le C_{\mathrm{out}}(n) \le O(n)^{2n}.
\end{gather*}

\section{Notation and Preliminaries}\label{sec:preliminaries}

\subsection{Convex bodies}
Throughout the paper, all convex bodies are assumed to be compact convex sets with nonempty interior. A convex body $K$ is called origin-symmetric if $K = -K$. For such a body we write 
\begin{gather*}
    \norm{x}_K = \inf \set*{t > 0 : x \in tK}.
\end{gather*}
We also write $B_p^n$ for the unit ball of $\ell_p^n$.

\subsection{Lattices}
A lattice $\Gamma \subset \R^n$ of rank $d$ is a set of integer linear combinations of $d$ linearly independent vectors $B = (b_1, \ldots, b_d)$, 
\begin{gather*}
    \Gamma = \set*{\sum_{i = 1}^d z_i b_i : z_i \in \Z}.
\end{gather*}
The vectors $b_1, \ldots, b_d$ are called a basis of $\Gamma$. The determinant of the lattice $\Gamma$ is given by $\det{\Gamma} = \sqrt{\det{B\transpose B}}$ and is independent of the choice of the basis of $\Gamma$.

For a full-rank lattice $\Gamma \subset \R^n$, the dual lattice is defined by
\begin{gather*}
    \dual{\Gamma} = \set*{y \in \R^n : \inner{x}{y} \in \Z \quad \forall x \in \Gamma}.
\end{gather*}
If $b_1, \ldots, b_n$ is a basis of $\Gamma$, then there exists a unique dual basis $b_1^\ast, \ldots, b_n^\ast$ of $\dual{\Gamma}$ satisfying $\inner{b_i}{b_j^\ast} = \delta_{i, j}$. Also $\det{\dual{\Gamma}} = (\det{\Gamma})^{-1}$.

\subsection{Generalized arithmetic progressions}

A symmetric generalized arithmetic progression, or GAP, in a lattice $\Gamma \subset \R^n$ is a set of lattice points of the form
\begin{gather*}
    P = \set*{Az : z \in \Z^d, \, \abs{z_i} \le b_i, \, 1 \le i \le d},
\end{gather*}
where $A \in \R^{n \times d}$ is a matrix with columns $a_1, \ldots, a_d \in \Gamma$ and $b = (b_1, \ldots, b_d) \in \R_{>0}^d$. The number $d$ is called the rank of $P$.

The GAP is called proper if the map $z \mapsto Az$ is injective on the box
\begin{gather*}
    \set*{z \in \Z^d : \, \abs{z_i} \le b_i, \, 1 \le i \le d}.
\end{gather*}
For $t > 0$, we denote by $P_t$ the GAP obtained by replacing each parameter $b_i$ by $t b_i$ and keeping the same generator matrix $A$. We say that $P$ is infinitely proper if $P_t$ is proper for every $t > 0$. Equivalently, the columns of the matrix $A$ are $\Z$-linearly independent.

The size of an infinitely proper GAP $P$ is defined by
\begin{gather*}
    \abs{P} = \prod_{i = 1}^d (2 \floor{b_i} + 1),
\end{gather*}
where $\floor{\cdot}$ denotes the floor function.

\subsection{Successive minima}

For a convex body $K$ and a lattice $\Gamma$, we define the $i$-th successive minimum $\succmin{i}{K}{\Gamma}$ as the least $\lambda > 0$ for which $\lambda K$ contains $i$ linearly independent vectors of $\Gamma$. We shall use the following standard facts about successive minima.

\begin{proposition}[Minkowski's second theorem {\cite[Chapter~\RomanNumeralCaps{8}]{Cassels1959GeometryNumbers}}] \label{thm:minkowski_second}
    Let $K \subset \R^n$ be an origin-symmetric convex body and $\Gamma \subset \R^n$ be a full-rank lattice. Then
    \begin{gather*}
        \vol{K} \prod_{i = 1}^n \lambda_i(K, \Gamma) \le 2^n \det{\Gamma}.
    \end{gather*}
\end{proposition}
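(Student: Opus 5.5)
We would follow the classical argument of Minkowski, in the streamlined form going back to Bambah, Woods and Zassenhaus. It constructs from $\tfrac12 K$ a body $M$ with two properties: its volume is $\vol(\tfrac12 K)\prod_{i}\lambda_i$, and its interior is disjoint from all of its nonzero $\Gamma$-translates. A packing argument then gives $\vol M \le \det\Gamma$, and since $\vol(\tfrac12 K) = 2^{-n}\vol K$ this is exactly the claimed inequality.

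\emph{Step 1 (setup).} Write $\lambda_i = \succmin{i}{K}{\Gamma}$. Choose linearly independent $v_1,\dots,v_n \in \Gamma$ with $v_i \in \lambda_i K$, and let $L_i = \Span(v_1,\dots,v_i)$, with $L_0 = \{0\}$. The basic observation is the following. If $x,y \in \operatorname{int}(\tfrac{\lambda_i}{2}K)$ and $x-y \in \Gamma$, then $x - y \in \operatorname{int}(\lambda_i K)$. By minimality of $\lambda_i$, this forces $x-y \in L_{i-1}$. Fix an orthonormal basis adapted to the flag $L_1\subset\dots\subset L_n$. In these coordinates the condition $z\in L_{i-1}$ says that all coordinates of $z$ beyond the first $i-1$ vanish.

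\emph{Step 2 (the map).} Define $M = \Phi(\tfrac12\operatorname{int}K)$, where $\Phi$ is built inductively through the chain of bodies $\tfrac{\lambda_1}{2}K \subset \tfrac{\lambda_2}{2}K \subset \dots \subset \tfrac{\lambda_n}{2}K$. The idea is to expand by $\lambda_n$ in the $L_{n-1}^\perp$ direction, and then to pass to the lower layers by translating fibres. More precisely, $\Phi$ is defined so that two points $x,y$ with $\Phi(x)-\Phi(y) \in \Gamma\setminus\{0\}$ can be pulled back, layer by layer, to points of some $\operatorname{int}(\tfrac{\lambda_i}{2}K)$ whose difference is a lattice vector outside $L_{i-1}$. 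This contradicts Step 1.

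Concretely, the map has the following form. For $x$ with coordinates $(x',x'')$ relative to $L_{i}\oplus L_i^\perp$, the component in $L_i^\perp \cap L_{i+1}$ is dilated by $\lambda_{i+1}$. The component in $L_i$ is replaced by the centroid-type translate of the corresponding section of $\tfrac{\lambda_i}{2}K$. One then recurses inside $L_i$.

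\emph{Step 3 (volume).} Compute $\vol M$ by Fubini over the flag. In each fibre, $\Phi$ is a translation composed with a dilation by $\lambda_{i+1}/\lambda_i$ in one direction. Translations of sections preserve volume, so the Jacobian factors telescope to $\prod\lambda_i$. This gives $\vol M = \prod_i \lambda_i\,\vol(\tfrac12 K)$.

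\emph{Step 4 (conclusion).} The translates $M+\gamma$, $\gamma\in\Gamma$, have pairwise disjoint interiors. Reducing modulo a fundamental domain therefore gives $\vol M \le \det\Gamma$, which is the claimed inequality.

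\emph{Main obstacle.} The hard step is the precise definition of $\Phi$ and the simultaneous verification of Steps 2 and 3. The map must be injective and measure-scaling by exactly $\prod\lambda_i/\lambda_{\cdot}$ on each fibre, while keeping the disjointness property. The easy half, the lattice-point observation in Step 1, does not by itself give this. The delicate point is that the sections of $\tfrac{\lambda_i}{2}K$ shift as one moves in the $L_i^\perp$ direction, and convexity is used to show that the translated fibres still satisfy the difference condition.

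If this route becomes too technical, there is a fallback: Henk's lattice-point counting proof. It compares the number of points of $\Gamma$ in $q\,\tfrac12 K$ against a sublattice of index $\prod\lceil q\lambda_i\rceil$ built from the $v_i$, then lets $q\to\infty$.
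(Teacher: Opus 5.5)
Your Steps 1 and 4 are sound. In Step 1, when $\lambda_{i-1}=\lambda_i$ you should argue with the least index $k$ such that $\lambda_k=\lambda_i$, but the conclusion that every nonzero point of $\Gamma\cap\operatorname{int}(\lambda_i K)$ lies in $L_{i-1}$ is correct. The gap is Step 2, which is the whole substance of the theorem. The map $\Phi$ is never defined, and the disjointness of the translates $M+\gamma$ is stated as a design requirement (``$\Phi$ is defined so that \dots'') rather than proved. The ``concrete'' description does not determine a map either. The $L_i^\perp\cap L_{i+1}$ component is dilated by $\lambda_{i+1}$ in Step 2 but by $\lambda_{i+1}/\lambda_i$ in Step 3, and a telescoping product of the ratios $\lambda_{i+1}/\lambda_i$ is $\lambda_n/\lambda_1$, not $\prod_i\lambda_i$. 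So as written, neither the volume identity nor the packing property is established, and the lattice-point-counting fallback is only named. (For comparison, the paper gives no proof of this proposition; it cites Cassels.)

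The missing idea is that you need no recursion or layer-by-layer pulling back: one closed formula does everything. Put $C=\operatorname{int}(\tfrac12K)$ and $\lambda_0=0$. Let $\psi_j(x)$ be the centroid of the section $C\cap(x+L_{j-1})$, so $\psi_1(x)=x$, and define
\[
\Phi(x)=\sum_{j=1}^n(\lambda_j-\lambda_{j-1})\,\psi_j(x),\qquad M=\Phi(C).
\]
The point $\psi_j(x)$ depends only on the coordinates $x_j,\dots,x_n$ of $x$ in your flag-adapted basis. Hence the $m$-th coordinate of $\Phi(x)$ is $\lambda_m x_m+f_m(x_{m+1},\dots,x_n)$. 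So $\Phi$ is injective and triangular with diagonal $\lambda_1,\dots,\lambda_n$, and Fubini gives $\vol M=\prod_j\lambda_j\vol C$. Now suppose $\Phi(x)-\Phi(y)=w\in\Gamma\cap(L_k\setminus L_{k-1})$. Descending induction on the coordinates gives $x-y\in L_k$, hence $\psi_j(x)=\psi_j(y)$ for all $j>k$, and
\[
w=\sum_{j\le k}(\lambda_j-\lambda_{j-1})\bigl(\psi_j(x)-\psi_j(y)\bigr)\in\sum_{j\le k}(\lambda_j-\lambda_{j-1})(C-C)=\lambda_k\operatorname{int}K.
\]
This uses two facts: centroids of sections of $C$ lie in $C$, and $C-C=\operatorname{int}K$ is convex. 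By Step 1 this forces $w\in L_{k-1}$, a contradiction. These two uses of convexity are exactly the verification your sketch defers.
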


Although a lattice basis consisting of vectors that attain the successive minima does not always exist, it is always possible to construct a basis whose vector norms are close to these optimal values.

\begin{proposition}[Mahler basis {\cite[Chapter~\RomanNumeralCaps{8}]{Cassels1959GeometryNumbers}}] \label{thm:mahler_basis}
    Let $K \subset \R^n$ be an origin-symmetric convex body and let $\Gamma \subset \R^n$ be a full-rank lattice. Then there exists a basis $b_1, \ldots, b_n$ of $\Gamma$ such that
    \begin{gather*}
        \norm{b_i}_K \le \max \set*{1, \frac{i}{2}} \lambda_i(K, \Gamma).
    \end{gather*}
\end{proposition}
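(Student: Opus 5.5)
The plan is the classical Cassels-type argument: start from vectors attaining the successive minima, build a basis adapted to the flag of subspaces they span, and then shorten each basis vector by subtracting a suitable lattice combination of the earlier minimal vectors. Write $\lambda_i = \lambda_i(K,\Gamma)$.

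\textbf{Step 1: minimal vectors.} Since $\Gamma$ is discrete and $K$ is compact, each $\lambda_i$ is attained. So I fix linearly independent $v_1,\dots,v_n \in \Gamma$ with $\norm{v_i}_K = \lambda_i$; they are chosen successively, $v_i$ being a shortest lattice vector outside $\Span(v_1,\dots,v_{i-1})$. Set $V_i = \Span(v_1,\dots,v_i)$ and $\Gamma_i = \Gamma \cap V_i$. Each $\Gamma_i$ is a rank-$i$ lattice, the sequence is nested, and each $\Gamma_{i-1} = \Gamma_i \cap V_{i-1}$ is primitive in $\Gamma_i$. Hence $\Gamma_i/\Gamma_{i-1}$ is free of rank one.

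\textbf{Step 2: adapted basis.} I choose $b_i \in \Gamma_i$ whose image generates $\Gamma_i/\Gamma_{i-1}$. By induction, $b_1,\dots,b_i$ is a basis of $\Gamma_i$, and in particular $b_1,\dots,b_n$ is a basis of $\Gamma_n = \Gamma$. The key flexibility is that $b_i$ may be replaced by any element of the coset $b_i + \Gamma_{i-1}$ without destroying the basis property. I will only subtract elements of the sublattice $\Z v_1 + \dots + \Z v_{i-1} \subseteq \Gamma_{i-1}$.

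\textbf{Step 3: shortening.} Write $v_i \equiv c_i b_i \pmod{V_{i-1}}$ with $c_i \in \Z\setminus\{0\}$; $c_i \neq 0$ because $v_i \notin V_{i-1}$.

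If $\abs{c_i} = 1$, then $\pm v_i$ itself lies in the coset $b_i + \Gamma_{i-1}$, so I take $b_i = \pm v_i$, which gives $\norm{b_i}_K = \lambda_i$.

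If $\abs{c_i} \ge 2$, put $x = v_i/c_i$. Then $b_i - x \in V_{i-1}$, so $b_i - x = \sum_{j<i} t_j v_j$ with real $t_j$. Let $m_j$ be the nearest integers to $t_j$ and replace $b_i$ by
\[
b_i' = b_i - \sum_{j<i} m_j v_j = x + \sum_{j<i} (t_j - m_j) v_j .
\]
Using the triangle inequality for $\norm{\cdot}_K$ and $\lambda_j \le \lambda_i$ for $j<i$,
\[
\norm{b_i'}_K \le \frac{\lambda_i}{\abs{c_i}} + \frac12 \sum_{j<i} \lambda_j \le \frac{\lambda_i}{2} + \frac{i-1}{2}\lambda_i = \frac{i}{2}\lambda_i .
\]
Combining the two cases yields $\norm{b_i}_K \le \max\set*{1, i/2}\,\lambda_i$.

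\textbf{Main obstacle.} The only delicate point is in Step 3. The correction must use integer multiples of the $v_j$, not of the $b_j$: this is what allows bounding the error through the $\lambda_j$ alone. It is legitimate only because $\Z v_1 + \dots + \Z v_{i-1} \subseteq \Gamma_{i-1}$, so the coset, and hence the basis property, is preserved. The remaining checks, namely primitivity of $\Gamma_{i-1}$ in $\Gamma_i$ and attainment of the minima, are routine.
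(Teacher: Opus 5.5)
Your proof is correct and complete, and it is the classical Cassels argument that the paper cites without reproducing. Like Cassels, you take lattice vectors attaining the successive minima and choose a basis adapted to the flag they span. You then reduce each $b_i$ modulo $\Z v_1 + \dots + \Z v_{i-1}$ with coefficients of absolute value at most $\frac12$, or take $b_i = \pm v_i$ when $\abs{c_i} = 1$.
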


\subsection{Lattice point enumerator}
\begin{proposition}[{\cite[Chapter~\RomanNumeralCaps{3}]{Cassels1959GeometryNumbers}}]
    Let $K \subset \R^n$ be an origin-symmetric convex body and $\Gamma \subset \R^n$ be a full-rank lattice. If, for some $m \in \Z_{\ge 0}$,
    \begin{gather*}
        \vol{K} \ge m 2^n \det{\Gamma},
    \end{gather*}
    then $K$ contains at least $m$ pairs of points $\pm u_j$ for $1 \le j \le m$, which are distinct from each other and from the origin.
\end{proposition}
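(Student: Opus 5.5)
The plan is to follow van der Corput's generalization of Minkowski's first theorem. We first pass to a Blichfeldt-type averaging statement for $\tfrac12 K$, and then take differences of the points it produces.

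\emph{Step 1 (strict inequality).} Assume first that $\vol K > m 2^n \det\Gamma$. Put $L = \tfrac12 K$, so $\vol L > m \det\Gamma$. Fix a fundamental domain $F$ of $\Gamma$ and consider
\begin{gather*}
    f(x) = \sum_{\gamma \in \Gamma} \mathbf{1}_{L}(x + \gamma).
\end{gather*}
Since $L$ is bounded, the sum is finite. By translating $L$ piece by piece into $F$, we get $\int_F f = \vol L > m \det\Gamma = m \vol F$. Hence there is a point $x$ with $f(x) \ge m+1$. This gives distinct lattice vectors $\gamma_0, \dots, \gamma_m$ with $y_i = x + \gamma_i \in L$.

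\emph{Step 2 (differences).} Order the points lexicographically in the coordinates of $\R^n$, and relabel so that $y_0$ is the lexicographically smallest. Set $u_i = y_i - y_0 = \gamma_i - \gamma_0 \in \Gamma$ for $1 \le i \le m$. Each $u_i$ lies in $K$, because origin-symmetry and convexity of $K$ give
\begin{gather*}
    \tfrac12 K - \tfrac12 K = \tfrac12 K + \tfrac12 K = K.
\end{gather*}
The $u_i$ are nonzero and pairwise distinct. They are all lexicographically positive, so $u_i \neq -u_j$ for all $i, j$. Therefore the $2m$ points $\pm u_1, \dots, \pm u_m$ are distinct from one another and from the origin.

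\emph{Step 3 (equality case).} Now suppose only $\vol K \ge m 2^n \det\Gamma$. For each $\epsilon > 0$ we have $\vol\bigl((1+\epsilon)K\bigr) > m 2^n \det\Gamma$ (the case $m = 0$ is trivial). Steps 1–2 then give $m$ admissible vectors in $(1+\epsilon)K \cap \Gamma$. For $\epsilon \le 1$ all of them lie in the finite set $2K \cap \Gamma$, so some fixed $m$-tuple occurs for a sequence $\epsilon_k \to 0$. Since $K$ is closed, that tuple lies in $K$.

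The only delicate point I anticipate is this strict-versus-nonstrict issue. Averaging yields $m+1$ points only under a strict inequality, and Step 3 resolves it using compactness of $K$ together with discreteness of $\Gamma$.
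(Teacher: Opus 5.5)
Your proof is correct and is the standard van der Corput argument (Blichfeldt-type averaging applied to $\tfrac12 K$, then differences with the lexicographically smallest point) from Chapter~III of Cassels, which the paper cites without giving a proof. Your handling of the non-strict inequality is also sound: you pigeonhole over the finite set $2K\cap\Gamma$ as $\epsilon\to 0$ and then use that $K$ is closed.
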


A simple consequence of this theorem is the following estimate on the lattice point enumerator.
\begin{corollary} \label{col:van_der_corput}
    Let $K \subset \R^n$ be an origin-symmetric convex body and $\Gamma \subset \R^n$ be a full-rank lattice. Then
    \begin{gather*}
        \vol{K} < 2^n \det \Gamma \, \abs{K \cap \Gamma}.
    \end{gather*}
\end{corollary}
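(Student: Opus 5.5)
The plan is to derive the strict inequality directly from the preceding proposition (van der Corput's theorem), applied with a suitable choice of $m$, and to argue by contradiction. Write $N = \abs{K \cap \Gamma}$. Since $K$ is origin-symmetric and convex, $0 \in K \cap \Gamma$, so $N \ge 1$. The nonzero points of $K \cap \Gamma$ come in pairs $\pm u$, because $K = -K$ and $\Gamma = -\Gamma$, and $u \ne -u$ for $u \ne 0$. Hence $N = 2p + 1$, where $p$ is the number of such pairs.

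Suppose, for contradiction, that $\vol K \ge 2^n \det\Gamma \cdot N$. Set $m = N \in \Z_{\ge 0}$; then the hypothesis of the proposition holds with this $m$. The proposition then gives $m = N$ pairs $\pm u_1, \dots, \pm u_N$ in $K \cap \Gamma$, all distinct from each other and from the origin. Together with the origin, $K \cap \Gamma$ would contain at least $2N + 1 > N$ points, which contradicts the definition of $N$. Therefore $\vol K < 2^n \det\Gamma \,\abs{K\cap\Gamma}$.

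One could sharpen this by taking $m = p + 1$ to get $\vol K < (p+1)2^n\det\Gamma$. However, the cruder choice $m = N$ already yields the stated strict bound. There is no real obstacle here. The only points to check are that the proposition's points $\pm u_j$ are lattice points lying in $K$, as in the paper's convention, and that $N$ is a nonnegative integer, so the proposition may be applied with $m = N$.
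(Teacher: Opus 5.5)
Your proposal is correct and is essentially the argument the paper has in mind: the paper gives no written proof and calls the statement a ``simple consequence'' of the van der Corput proposition. Your contradiction argument, applying that proposition with $m = \abs{K \cap \Gamma}$ (or, more sharply, with $m = p+1$), is exactly that derivation.
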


\begin{lemma}[Lattice point enumerator for a cross-polytope] \label{lem:enumerator_cross_polytope}
    For any $n, m \in \Z_{>0}$ the following formula holds.
    \begin{gather*}
        \abs{m B_1^n \cap \Z^n} = \sum_{k = 0}^n 2^k \binom{n}{k} \binom{m}{k} 
    \end{gather*}
\end{lemma}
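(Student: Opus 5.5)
We need to prove $\abs{mB_1^n\cap\Z^n}=\sum_{k=0}^n 2^k\binom{n}{k}\binom{m}{k}$. The plan is to classify each lattice point $z$ with $\sum_i \abs{z_i}\le m$ by its support $S=\set{i : z_i\neq 0}$ and count each class separately.

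First, fix a support of size $k=\abs{S}$. There are $\binom{n}{k}$ ways to choose $S$, and $2^k$ ways to choose the signs of the nonzero coordinates. Once the support and signs are fixed, the point is determined by the vector of absolute values $(a_i)_{i\in S}$. These are positive integers satisfying $\sum_{i\in S} a_i\le m$.

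Second, count such vectors. Introduce a slack variable $a_0=m-\sum_{i\in S} a_i\ge 0$. Shifting $a_0\mapsto a_0+1$ turns the condition into compositions of $m+1$ into $k+1$ positive parts. By stars and bars there are $\binom{m}{k}$ of them. Alternatively, the partial sums $a_1<a_1+a_2<\dots<a_1+\dots+a_k$ form a strictly increasing sequence in $\set{1,\dots,m}$, and this map is a bijection onto the $k$-subsets of $\set{1,\dots,m}$.

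Summing over $k$ from $0$ to $n$ gives the formula. The $k=0$ term correctly counts the origin, and terms with $k>m$ vanish automatically because $\binom{m}{k}=0$.

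There is no real obstacle here. The only care needed is to check that the partial-sum correspondence is a bijection, which holds since consecutive differences recover the $a_i$.
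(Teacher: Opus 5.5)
Your proposal is correct and follows the paper's proof: both classify the lattice points by the number $k$ of nonzero coordinates, count $\binom{n}{k}$ supports and $2^k$ sign patterns, and use stars and bars to get $\binom{m}{k}$ tuples of positive integers with sum at most $m$. Your slack-variable and partial-sum arguments for the $\binom{m}{k}$ count just spell out the step the paper cites as stars and bars.
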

\begin{proof}
    Count lattice points according to the number $k$ of nonzero coordinates. For each choice of the support in $\binom{n}{k}$ ways and each choice of the signs in $2^k$ ways, the absolute values of the nonzero coordinates are positive integers $u_1, \ldots, u_k$ with $u_1 + \cdots + u_k \le m$. The number of such $k$-tuples is $\binom{m}{k}$ according to the stars and bars argument. Summing over all values of $k$ gives the formula. 
\end{proof}

\subsection{Polar body and Mahler volume}
For a convex body $K \subset \R^n$ with the origin in its interior, the polar body is defined by
\begin{gather*}
    \polar{K} = \set{y \in \R^n : \angles{x, y} \le 1 \, \forall x \in K}.
\end{gather*}
The polar body is always convex, and if $K$ is origin-symmetric, then $\polar{K}$ is origin-symmetric again. The product
\begin{gather*}
    \nu(K) = \vol{K} \vol{\polar{K}}
\end{gather*}
is called the Mahler volume of $K$. We shall only need the following lower bound on $\nu(K)$.

\begin{proposition}[{\cite[Corollary~1.6]{Kuperberg2008Mahler}}] \label{thm:mahler_product_lower_bound}
    For every origin-symmetric convex body $K \subset \R^n$, one has
    \begin{gather*}
        \nu(K) = \vol{K} \vol{\polar{K}} \ge \frac{\pi^n}{n!}.
    \end{gather*}
\end{proposition}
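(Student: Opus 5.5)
Since the paper cites this bound from Kuperberg, I would not look for an independent argument. I would reproduce the structure of Kuperberg's proof, normalized so that the constant comes out as $\pi^n/n!$, which is Kuperberg's $\frac{4^n}{n!}(\pi/4)^{n}$ (equivalently, his estimate up to the factor $(\pi/4)^n$). Cheap routes fail. For instance, John's theorem applied to $K$ and $\polar{K}$ gives only $\nu(K) \ge \nu(B_2^n)\, n^{-n/2}$. Since $\nu(B_2^n)=\pi^n/\Gamma(n/2+1)^2$, this is smaller than $\pi^n/n!$ by roughly a factor $(\pi n)\,(n/4)^{n/2}$ up to lower-order terms. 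The proof therefore has to be genuinely two-body.

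\textbf{Step 1.} Rewrite the volumes as integrals over all of space. For an origin-symmetric body $L$ one has
\begin{gather*}
    \vol{L} = \frac{1}{n!}\int_{\R^n} e^{-\norm{x}_L}\,dx .
\end{gather*}
Hence $\nu(K)$ equals $(n!)^{-2}$ times the integral of $e^{-\norm{x}_K-\norm{y}_{\polar{K}}}$ over $\R^n\times\R^n$. Since $\norm{x}_K\norm{y}_{\polar{K}}\ge \abs{\inner{x}{y}}$, it suffices to bound from below the $2n$-dimensional measure of suitable regions of $K\times\polar{K}$.

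\textbf{Step 2.} Carry out Kuperberg's topological step. Consider the hypersurfaces $\partial K\times\{y\}$ and $\{x\}\times\partial\polar{K}$, and the set $\{(x,y)\in K\times\polar{K}:\inner{x}{y}\ge 1-\epsilon\}$ near the ``bottleneck''. Using the symplectic form $\sum_i dx_i\wedge dy_i$ and Stokes' theorem, one shows that the $(2n-1)$-dimensional measure of any slice separating the diagonal-type set $\{\inner{x}{y}=1\}$ from the origin is bounded below. The bound comes from the Gauss linking number of the two $(n-1)$-spheres $\partial K$ and $\partial\polar{K}$ placed in $\R^{2n}$: that linking number equals $1$ regardless of $K$. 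Integrating these slice estimates over the parameter $\inner{x}{y}\in[0,1]$ then yields $\vol{K}\vol{\polar{K}}\ge c_n$.

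\textbf{Step 3.} Optimize the constant. Compare the slice bound with the extremal pair $(B_1^n,B_\infty^n)$, for which $\nu = 4^n/n!$. Losing a factor $\pi/4$ per dimension in the linking estimate, which comes from the Gaussian normalization of the linking integral, gives $\pi^n/n!$.

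The main obstacle is Step 2: turning the linking number into a quantitative lower bound on volume. I would follow Kuperberg verbatim there, rather than attempt to rederive it.
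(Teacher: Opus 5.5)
\textbf{Verdict.} The paper gives no proof of this proposition. It is quoted from Kuperberg, and the only thing to check is the normalization $\pi^n/n!=(\pi/4)^n\cdot 4^n/n!$, which you get right. Insofar as your plan ends with ``follow Kuperberg verbatim'', it is the same citation. But the sketch you build around the citation does not describe a working argument. Read as a proof, it has a genuine gap: no step produces a quantitative bound, and several statements are false.

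\textbf{Step 1.} The inequality $\norm{x}_K\norm{y}_{\polar{K}}\ge\abs{\inner{x}{y}}$ only gives the \emph{upper} bound $e^{-\norm{x}_K-\norm{y}_{\polar{K}}}\le e^{-2\sqrt{\abs{\inner{x}{y}}}}$ on the integrand. That is the wrong direction for a lower bound, and this majorant is not even integrable on $\R^{2n}$. Moreover, ``bound from below the measure of regions of $K\times\polar{K}$'' is just the identity $\nu(K)=\vol_{2n}(K\times\polar{K})$, so the exponential representation contributes nothing.

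\textbf{Step 2.} The sets $\partial K\times\{y\}$ are $(n-1)$-dimensional, not hypersurfaces. More seriously, two disjoint $(n-1)$-spheres in $\R^{2n}$ have no linking number. A $p$-cycle and a $q$-cycle can only link in ambient dimension $p+q+1$, which here is $2n-1$; indeed $H_{n-1}(\R^{2n}\setminus S^{n-1})=0$ for $n\ge 2$. A nontrivial link, such as $S^{n-1}\times\{0\}$ and $\{0\}\times S^{n-1}$ inside the unit sphere of $\R^{2n}$, lives in a $(2n-1)$-dimensional sphere. Turning ``linking number $1$'' into a volume inequality is the substance of Kuperberg's argument, and you leave it as a slogan.

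\textbf{Step 3.} This step is fitted to the answer. A comparison with the pair $(B_1^n,B_\infty^n)$ cannot give a bound valid for every $K$; that would be Mahler's conjecture, which is open in general. Nothing in Steps~1--2 produces a loss of exactly $\pi/4$ per dimension, so attributing it to a ``Gaussian normalization'' is unsupported.

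\textbf{What the paper actually needs.} For this statement the appropriate proof is the citation itself. The proposition enters only in the upper bound of \Cref{prop:polar_enumerator_constant_bound}. There, any Bourgain--Milman type estimate $\nu(K)\ge c^n/n!$ with an absolute constant $c>0$ gives $\mathfrak{D}_n\le C^n n!$, and hence still $\abs{P}\le O(n)^{2n}\abs{K\cap\Gamma}$. The value $c=\pi$ only affects the constant $6/\pi$ in the final estimate. Your John-type observation shows why an input of that strength is needed: $\nu(K)\ge n^{-n/2}\nu(B_2^n)$ would only give $\mathfrak{D}_n\le O(n)^{3n/2}$, and hence $O(n)^{5n/2}$ in \Cref{thm:GAP_cover}. (The polynomial factor in your comparison should be $\sqrt{\pi n/2}$ rather than $\pi n$, since $n!/\Gamma(n/2+1)^2\sim 2^n\sqrt{2/(\pi n)}$; this is harmless.)
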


\section{Auxiliary results}\label{sec:auxiliary-results}
The proof of the main result relies on a relation linking the lattice point enumerator $\abs{K\cap\Gamma}$ with the product of the successive minima associated with the pair $(\polar{K}, \dual{\Gamma})$. To state this relation conveniently, define

\begin{gather*}
    \mathfrak{D}_n = \sup_{K, \Gamma} \frac{\prod_{i = 1}^n \succmin{i}{\polar{K}}{\dual{\Gamma}}}{\abs{K \cap \Gamma}},
\end{gather*}
where the supremum is taken over all origin-symmetric convex bodies $K \subset \R^n$ and all full-rank lattices $\Gamma \subset \R^n$.

\begin{proposition} \label{prop:polar_enumerator_constant_bound}
    For every $n \ge 1$,
    \begin{gather*}
        \left(\frac{1}{2}\right)^n n! \le \mathfrak{D}_n \le \left(\frac{4}{\pi}\right)^n n!.
    \end{gather*}
\end{proposition}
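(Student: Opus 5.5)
The plan is to prove the two bounds separately. The upper bound comes from volume arguments: Minkowski's second theorem applied to the dual pair, the lattice point enumerator, and the Mahler volume lower bound. The lower bound comes from an explicit extremal example built from a cross-polytope.

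\emph{Upper bound.} Fix $K$ and $\Gamma$. Since $\det \dual{\Gamma} = (\det\Gamma)^{-1}$, \Cref{thm:minkowski_second} applied to $(\polar{K}, \dual{\Gamma})$ gives
\begin{gather*}
    \prod_{i=1}^n \succmin{i}{\polar{K}}{\dual{\Gamma}} \le \frac{2^n}{\vol{\polar{K}} \det \Gamma}.
\end{gather*}
By \Cref{thm:mahler_product_lower_bound}, $\vol{\polar{K}} \ge \pi^n/(n! \vol{K})$, so the right-hand side is at most $2^n n! \vol{K}/(\pi^n \det\Gamma)$. Finally, \Cref{col:van_der_corput} gives $\vol{K} < 2^n \det\Gamma \abs{K\cap\Gamma}$. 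Combining these yields $\prod_i \lambda_i \le (4/\pi)^n n! \abs{K\cap\Gamma}$, and taking the supremum gives the upper bound on $\mathfrak{D}_n$. This step is routine.

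\emph{Lower bound.} Here we need a single pair $(K,\Gamma)$ whose ratio is at least $n!/2^n$. The natural choice is $\Gamma = \Z^n$ (so $\dual{\Gamma} = \Z^n$) and $K = m B_1^n$ with $m$ a positive integer to be optimized. Then $\polar{K} = \frac1m B_\infty^n$. A nonzero integer vector has sup-norm at least $1$, so all successive minima of $(\frac1m B_\infty^n, \Z^n)$ equal $m$, attained at the standard basis. The product is therefore $m^n$.

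By \Cref{lem:enumerator_cross_polytope},
\begin{gather*}
    \abs{mB_1^n\cap\Z^n} = \sum_{k=0}^n 2^k\binom nk\binom mk.
\end{gather*}
We then need
\begin{gather*}
    \frac{m^n}{\sum_k 2^k\binom nk\binom mk} \ge \frac{n!}{2^n}
\end{gather*}
for a suitable $m$, or at least in the limit $m\to\infty$, since the bound is a supremum. As $m\to\infty$ the sum is dominated by the $k=n$ term, $2^n\binom mn \sim 2^n m^n/n!$. The ratio therefore tends to $n!/2^n$, which gives $\mathfrak{D}_n \ge n!/2^n$.

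The only delicate point is that the lower bound is reached only in the limit and not by a finite $m$. This is harmless, because $\mathfrak{D}_n$ is defined as a supremum. If a finite witness is wanted, one can bound the lower-order terms $k<n$ by $O_n(m^{n-1})$ and take $m$ large, accepting an arbitrarily small loss.
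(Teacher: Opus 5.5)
Your proposal is correct and follows the paper's proof essentially step for step. The upper bound uses Minkowski's second theorem for $(\polar{K},\dual{\Gamma})$, Kuperberg's Mahler-volume bound and \Cref{col:van_der_corput}, and the lower bound uses $K_m = mB_1^n$, $\Gamma=\Z^n$ with $m\to\infty$, justified because $\mathfrak{D}_n$ is a supremum.
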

\begin{proof}[Proof of the upper bound]
    \Cref{thm:minkowski_second} applied to $\polar{K}$ and $\dual{\Gamma}$ gives
    \begin{gather*}
        \vol{\polar{K}} \prod_{i = 1}^n \succmin{i}{\polar{K}}{\dual{\Gamma}} \le 2^n \det{\dual{\Gamma}} = \frac{2^n}{\det{\Gamma}}.
    \end{gather*}
    By \Cref{thm:mahler_product_lower_bound}
    \begin{gather*}
        \frac{1}{\vol{\polar{K}}} \le \frac{n! \vol{K}}{\pi^n}.
    \end{gather*}
    Thus,
    \begin{gather*}
        \prod_{i = 1}^n \succmin{i}{\polar{K}}{\dual{\Gamma}} \le \frac{2^n}{\vol{\polar{K}} \det{\Gamma}} \le \left(\frac{2}{\pi}\right)^n n! \frac{\vol{K}}{\det \Gamma}.
    \end{gather*}
    Finally, \Cref{col:van_der_corput} gives
    \begin{gather*}
        \frac{\vol{K}}{\det{\Gamma}} \le 2^n \abs{K \cap \Gamma}.
    \end{gather*}
    The combination of the last two inequalities gives the stated upper bound.
\end{proof}
\begin{proof}[Proof of the lower bound]
    To prove the lower bound, take $K_m = m B_1^n$, with $m$ a positive integer. Then $K_m^{\circ} = \frac{1}{m} B_{\infty}^n$. For $\Gamma = \Z^n$, since $\dual{(\Z^n)} = \Z^n$, we have
    \begin{gather*}
        \succmin{i}{K_m^{\circ}}{\dual{(\Z^n)}} = m, \qquad i = 1, \ldots, n.
    \end{gather*}
    On the other hand, by \Cref{lem:enumerator_cross_polytope} we have
    \begin{gather*}
        \abs{K_m \cap \Z^n} = \abs{m B_1^n \cap \Z^n} = \sum_{k = 0}^n 2^k \binom{n}{k} \binom{m}{k}.
    \end{gather*}
    Taking the supremum over the pairs $(K_m, \Z^n)$ gives the following lower bound on $\mathfrak{D}_n$.
    \begin{gather*}
        \mathfrak{D}_n \ge \sup_{m \in \Z_{>0}} \frac{\prod_{i = 1}^n \succmin{i}{K_m^\circ}{(\Z^n)^\ast}}{\abs{K_m \cap \Z^n}} = \sup_{m \in \Z_{>0}} \frac{m^n}{\sum_{k = 0}^n 2^k \binom{n}{k} \binom{m}{k}} \ge \lim_{m \to \infty} \frac{m^n}{\sum_{k = 0}^n 2^k \binom{n}{k} \binom{m}{k}} =\\= \lim_{m \to \infty} \frac{m^n}{2^n \binom{m}{n}} = \frac{n!}{2^n} \cdot \lim_{m \to \infty} \frac{m^n}{m (m - 1) \ldots (m - n + 1)} = \frac{n!}{2^n}
    \end{gather*}
\end{proof}

\section{Proof of the upper bound}\label{sec:main-proof}

\begin{lemma} \label{lem:dual_successive_minima}
    If $\Span(K \cap \Gamma) = \R^n$, then every non-zero vector $a \in \dual{\Gamma}$ satisfies
    \begin{gather*}
        \norm{a}_{\polar{K}} \ge 1.
    \end{gather*}
    In particular,
    \begin{gather*}
        1 \le \succmin{1}{\polar{K}}{\dual{\Gamma}} \le \ldots \le \succmin{n}{\polar{K}}{\dual{\Gamma}}.
    \end{gather*}
\end{lemma}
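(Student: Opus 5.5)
We argue by contradiction, pairing the dual vector with lattice points of $K$. Suppose $a \in \dual{\Gamma}$ is non-zero and $\norm{a}_{\polar{K}} < 1$. Then $a \in t\polar{K}$ for some $t < 1$. By the definition of the polar body, this gives $\inner{x}{a} \le t$ for every $x \in K$. Since $K$ is origin-symmetric, we may apply the same bound to $-x$, and so $\abs{\inner{x}{a}} \le t < 1$ for all $x \in K$.

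Now restrict to lattice points. For $x \in K \cap \Gamma$ we have $\inner{x}{a} \in \Z$ by the definition of the dual lattice. Combined with $\abs{\inner{x}{a}} < 1$, this forces $\inner{x}{a} = 0$. Hence $a$ is orthogonal to every point of $K \cap \Gamma$, and therefore to $\Span(K \cap \Gamma) = \R^n$. This gives $a = 0$, a contradiction. So every non-zero $a \in \dual{\Gamma}$ satisfies $\norm{a}_{\polar{K}} \ge 1$.

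For the chain of inequalities, note first that $\polar{K}$ is an origin-symmetric convex body: it is bounded because $K$ contains a neighbourhood of the origin, and it has non-empty interior because $K$ is bounded. Hence the successive minima $\succmin{i}{\polar{K}}{\dual{\Gamma}}$ are well defined. The monotonicity $\succmin{1}{\polar{K}}{\dual{\Gamma}} \le \dots \le \succmin{n}{\polar{K}}{\dual{\Gamma}}$ is immediate from the definition.

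It remains to show $\succmin{1}{\polar{K}}{\dual{\Gamma}} \ge 1$. If $\lambda K^{\circ}$ contains a non-zero dual vector $a$, then $\norm{a}_{\polar{K}} \le \lambda$. By the first part, $\norm{a}_{\polar{K}} \ge 1$, so $\lambda \ge 1$.

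There is no real obstacle here. The only point needing care is to use the strict inequality $\norm{a}_{\polar{K}} < 1$ to obtain a scaling $t < 1$, and then to use the symmetry of $K$ to control $\abs{\inner{x}{a}}$ rather than just $\inner{x}{a}$.
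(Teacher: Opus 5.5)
Your proof is correct and is essentially the paper's argument in contrapositive form. The paper takes one $x \in K \cap \Gamma$ with $\langle x, a\rangle \neq 0$ and reads off $\norm{a}_{\polar{K}} = \sup_{y \in K} \abs{\langle y, a\rangle} \ge \abs{\langle x, a\rangle} \ge 1$, whereas you assume $\norm{a}_{\polar{K}} < 1$ and deduce $a \perp \Span(K \cap \Gamma) = \R^n$, using the same key facts (integrality of $\langle x, a\rangle$ and the spanning hypothesis).
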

\begin{proof}
    Let $a \in \dual{\Gamma} \setminus \set{0}$. Since $\Span(K \cap \Gamma) = \R^n$, there exists $x \in K \cap \Gamma$ with $\inner{x}{a} \neq 0$. Since $a \in \dual{\Gamma}$, we have $\inner{x}{a} \in \Z$, and therefore $\abs{\inner{x}{a}} \ge 1$. Consequently,
    \begin{gather*}
        \norm{a}_{\polar{K}} = \sup_{y \in K} \abs{\inner{y}{a}} \ge \abs{\inner{x}{a}} \ge 1.
    \end{gather*}
    The inequalities for the successive minima follow immediately.
\end{proof}

\begin{proof}[Proof of \Cref{thm:GAP_cover}]
    By passing to a subspace if necessary, we may assume that $K \cap \Gamma$ is full-dimensional.
    By \Cref{thm:mahler_basis} applied to $\polar{K}$ and $\dual{\Gamma}$, there exists a basis $a_1, \ldots, a_n$ of $\dual{\Gamma}$ such that
    \begin{gather*}
        \norm{a_i}_{\polar{K}} \le \gamma_i, \qquad \gamma_i = \max\set*{1, \frac{i}{2}} \succmin{i}{\polar{K}}{\dual{\Gamma}}.
    \end{gather*}
    Let $b_1, \ldots, b_n$ be the basis of $\Gamma$ dual to $a_1, \ldots, a_n$, so that $\inner{a_i}{b_j} = \delta_{i, j}$ for all $i, j = 1, \ldots, n$. 
    Let $x \in K \cap \Gamma$. Using the dual basis, we may write
    \begin{gather*}
        x = \sum_{i = 1}^n \inner{x}{a_i} b_i.
    \end{gather*}
    Since $a_i \in \dual{\Gamma}$, each coefficient $\inner{x}{a_i}$ is an integer. Also, $a_i \in \gamma_i \polar{K}$ and $x \in K$, hence
    \begin{gather*}
        \abs{\inner{x}{a_i}} \le \gamma_i, \qquad 1 \le i \le n.
    \end{gather*}
    It follows that 
    \begin{gather*}
        x \in P \defeq \left\{ \sum_{i = 1}^n z_i b_i : z_i \in \Z, \, \abs{z_i} \le \gamma_i \right\}.
    \end{gather*}
    Thus, $P$ is a GAP containing all points of $K \cap \Gamma$. It is infinitely proper, because $b_1, \ldots, b_n$ form a basis of $\Gamma$. Since $\Span(K \cap \Gamma) = \R^n$, \Cref{lem:dual_successive_minima} gives $\gamma_i \ge 1$ for all $i$. Therefore the size of the GAP $P$ can be bounded by
    \begin{gather*}
        \abs{P} = \prod_{i = 1}^n \left(2 \floor{\gamma_i} + 1\right) \le 3^n \prod_{i = 1}^n \gamma_i = 3^n \cdot \frac{n!}{2^{n - 1}} \prod_{i = 1}^n \succmin{i}{\polar{K}}{\dual{\Gamma}}.
    \end{gather*}
    By \Cref{prop:polar_enumerator_constant_bound}, 
    \begin{gather*}
        \prod_{i = 1}^n \succmin{i}{\polar{K}}{\dual{\Gamma}} \le \mathfrak{D}_n \abs{K \cap \Gamma} \le \left(\frac{4}{\pi}\right)^n n! \abs{K \cap \Gamma}.
    \end{gather*}
    Consequently,
    \begin{gather*}
        \abs{P} \le 2 \left(\frac{6}{\pi}\right)^n n!^2 \abs{K \cap \Gamma} = O(n)^{2n} \abs{K \cap \Gamma},
    \end{gather*}
    where the last estimate follows from Stirling's formula.
\end{proof}
\begin{remark}
    The loss of $O(n)^{2n}$ consists of two independent factors. The first one comes from Mahler's basis theorem. More precisely, one can choose a basis $u_1, \ldots, u_n$ of $\Gamma$ such that
    \begin{gather*}
        \prod_{i = 1}^n \norm{u_i}_K
        \le
        \frac{n!}{2^{\,n-1}}
        \prod_{i = 1}^n \succmin{i}{K}{\Gamma}.
    \end{gather*}
    Thus any improvement in the loss between the minimal possible product of the lengths of basis vectors and the product of the successive minima would directly improve the final estimate. 
    
    The second factor comes from passing from the successive minima of the polar body to the lattice point enumerator. By \Cref{prop:polar_enumerator_constant_bound}, this step requires a loss of order $\Omega(n)^n$ in general, and therefore cannot be asymptotically improved within this method. 
\end{remark}

\section{Proof of the lower bound}

\begin{proof}[Proof of \Cref{thm:GAP_lower_bound}]
    First consider the case $\Gamma = \Z^n$ and take $K_m = m B_1^n$ with $m \in \Z_{>0}$. Let
    \begin{gather*}
        P_m = \set*{A z : z \in \Z^d, \abs{z_i} \le b_i, \, 1 \le i \le d}
    \end{gather*}
    be an infinitely proper GAP containing $K_m \cap \Z^n$. The columns of $A$ are $\Z$-linearly independent vectors of $\Z^n$. Moreover, since $e_1, \ldots, e_n \in K_m \cap \Z^n \subseteq P_m$, these columns generate the whole lattice $\Z^n$. Hence they form a basis of $\Z^n$, and therefore $A \in \GL_n(\Z)$. Since $\pm m e_j \in K_m \cap \Z^n$, we have
    \begin{gather*}
        \abs{(A^{-1} m e_j)_i} \le b_i \qquad \forall 1 \le i, j \le n.
    \end{gather*}
    Since $A \in \GL_n(\Z)$, its inverse $A^{-1}$ also belongs to $\GL_n(\Z)$. Hence, for each $1 \le i \le n$, there exists $1 \le j \le n$ such that $\abs{(A^{-1})_{i, j}} \ge 1$. Therefore,
    \begin{gather*}
        b_i \ge \abs{(A^{-1} m e_j)_i} = m \, \abs{(A^{-1})_{i, j}} \ge m.
    \end{gather*}
    It follows that
    \begin{gather*}
        \abs{P_m} = \prod_{i = 1}^n (2 \floor{b_i} + 1) \ge (2m + 1)^n.
    \end{gather*}
    On the other hand, by \Cref{lem:enumerator_cross_polytope},
    \begin{gather*}
        \abs{K_m \cap \Z^n} = \abs{m B_1^n \cap \Z^n} = \sum_{k = 0}^n 2^k \binom{n}{k} \binom{m}{k}.
    \end{gather*}
    Hence,
    \begin{gather*}
        \lim_{m \to \infty} \frac{(2 m + 1)^n}{\abs{K_m \cap \Z^n}} = \lim_{m \to \infty} \frac{(2m + 1)^n}{\sum_{k = 0}^n 2^k \binom{n}{k} \binom{m}{k}} = \lim_{m \to \infty} \frac{(2m + 1)^n}{2^n \binom{m}{n}} = n!.
    \end{gather*}
    For a fixed $n$, choose $m$ so large that this ratio is at least $\frac{n!}{2}$. Since $n! \ge \left(\frac{n}{e}\right)^n$, we obtain
    \begin{gather*}
        \abs{P_m} \ge \frac{n!}{2} \abs{K_m \cap \Z^n} \ge \left(\frac{n}{2e}\right)^n \abs{K_m \cap \Z^n}.
    \end{gather*}
    This proves the statement for the lattice $\Z^n$, for instance with $c = \frac{1}{2e}$. 

    Now let $\Gamma \subset \R^n$ be an arbitrary full-rank lattice and choose a linear isomorphism $T$ such that $T \Z^n = \Gamma$. Put $K = T K_m$, where $m$ is chosen as above. Then $K$ is an origin-symmetric convex body and $\abs{K \cap \Gamma} = \abs{K_m \cap \Z^n}$. If an infinitely proper GAP $P$ in $\Gamma$ contains $K \cap \Gamma$, then $T^{-1} P$ is an infinitely proper GAP in $\Z^n$ containing $K_m \cap \Z^n$ and has the same size as $P$. Therefore, the already proved statement for the lattice $\Z^n$ gives
    \begin{gather*}
        \abs{P} = \abs{T^{-1} P} \ge \left(\frac{n}{2e}\right)^n \abs{K_m \cap \Z^n} = \left(\frac{n}{2e}\right)^n \abs{K \cap \Gamma}.
    \end{gather*}
\end{proof}

\bibliographystyle{amsalpha}
\bibliography{refs}

\end{document}